\newtheorem{theorem}{Theorem}[section]
\newtheorem{proposition}[theorem]{Proposition}
\newcommand{\changed}[1]{\textcolor{black}{#1}}
\def\BibTeX{{\rm B\kern-.05em{\sc i\kern-.025em b}\kern-.08em
		T\kern-.1667em\lower.7ex\hbox{E}\kern-.125emX}}
\begin{document}
	
	\title{\huge Feasible Region for DERs in Unbalanced Distribution Networks with Uncertain Line Impedances\\
		\thanks{The authors would like to acknowledge support from the CSIRO Strategic Project on Network Optimisation \& Decarbonisation under Project Number: OD-107890.}
	}
	
	\author{\IEEEauthorblockN{Bin Liu {\it Member, IEEE}}
		\IEEEauthorblockA{Energy Systems Program, Energy Centre\\
			Commonwealth Scientific and Industrial Research Organisation\\
			Mayfield West, NSW 2304, Australia\\
			brian.liu@csiro.au}
		\and
		\IEEEauthorblockN{Jin Ma {\it Member, IEEE}}
		\IEEEauthorblockA{School of Electrical and Information Engineering\\
			The University of Sydney \\
			Camperdown, NSW 2006, Australia\\
			j.ma@sydney.edu.au}
	}
	
	\maketitle
	
	\begin{abstract}
		The rapid development of distributed energy resources (DERs) has brought many challenges to the operation of distribution networks in recent years, where operating envelopes (OEs), as a key enabler to facilitate DER integration via virtual power plants, have attracted increasing attention. Geometrically, OEs are inherently linked to the feasible region (FR), which depicts an area that quantifies the permissible operational ranges for all DERs. This paper studies how to calculate FR for DERs based on a linear unbalanced three-phase power flow model while considering uncertainties from measured/estimated line impedances, leading to the robust feasible region (RFR). With an initialised FR, the proposed solution algorithm aims to seek the RFR by solving a non-convex bilinear problem, formulated based on Motzkin Transposition Theorem, and updating the RFR alternately until reaching convergence. Two cases, including a 2-bus illustrative network and a representative Australian network, are tested to demonstrate the effectiveness of the proposed approach.
	\end{abstract}
	
	\begin{IEEEkeywords}
		DERs, distribution network, operational envelopes, robust optimisation, unbalanced OPF, uncertainty modelling
	\end{IEEEkeywords}
	
	\section{Introduction}
	Distributed energy resources (DERs), has been rapidly growing worldwide, and particularly in Australia in recent years \cite{AustralianEnergyCouncil2020}. High penetration of DERs is a crucial enabler to decarbonise the energy sector through more engagement of residential customers. However, due to relatively low visibility and controllability, high DER penetration could lead to, for example, over/under-voltage and minimum operational demand issues \cite{Review-01, Review-02,aemo-min-demand}. Addressing such issues needs close coordination among transmission network operators (TSOs), distribution network operators (DSOs), emerging DER aggregators in operating virtual power plants (VPPs) and market operators, and potential future coordination architectures were investigated in \cite{EnergyNetworksAustralia2020}.
	
	Operating envelope (OE), which specifies the operational range for each DER device that is permissible within the network operational limits at a given point in time and location \cite{DEIP2022, Petrou2021}, is identified as a key enabler in potential future power system architectures and has gained increasing interest from both industry and academia \cite{DEIP2022,Petrou2021,project_edge,project_symphony,ESB_DER}. 
	\changed{
		In general, OEs may not be necessary for a network with low penetration of DERs since over-voltage issues can be neglected and under-voltage issues when peak load scenarios occur can be well addressed. However, when the penetration of DERs continues increasing in the future, OEs can be used by DSOs to manage the export and import limits for DERs based on available real-time network hosting capacity and facilitate DER participation in the electricity market to provide network services \cite{Liu2021}}.
	Substantial advances have been made in developing approaches to calculating OEs in recent years, including the unbalanced three-phase power flow (UTPF)-based approach, where OEs are calculated iteratively by solving a series of PF problems with varying export/import limits \cite{Blackhall2020,project_edge,project_symphony,BL_ieee_access}, the unbalanced three-phase optimal power flow (UTOPF)-based approach, where OEs of all DERs are calculated simultaneously subject network constraints \cite{To2013,Petrou2021,Liu2022_doe}, and machine learning (ML)-based approach, which is useful for networks with low visibility \cite{Ochoa2022,DEIP2022}.
	However, uncertainties naturally exist in power grids and may come from load forecast errors, inaccurate estimations or measurements for line impedances and topology. Such uncertainties can potentially undermine the accuracy of FR and, as a result, lead to unreliable and non-robust OEs. To address this issue, this paper proposes an approach to calculate the \emph{robust} FR (RFR) for DERs in an unbalanced distribution network while considering uncertainties from line impedances based on a linear UTPF model. Such uncertainties are formulated as box sets on either self and mutual impedances directly or on the positive/negative and zero sequence impedances of all types of line codes. With formulated uncertainties and an initialised FR, seeking the RFR is achieved by alternately solving a \emph{max-min} problem formulated based on Motzkin Transposition Theorem (MTT). Both a 2-bus illustrative network and a representative Australian network are tested to demonstrate the effectiveness of the proposed approach.
	
	It is noteworthy that the proposed approach only applies when a linear UTPF model is used, and we admit that errors inherently exist for such formulations and are unavoidable. Hence, improving the accuracy of linear UTPF will help improve the effectiveness of the RFR. \changed{Interested readers are referred to \cite{liu2022robust} for a comparative study of FRs calculated based on a linear UTPF model and an exact UTOPF model.
		Moreover, benefits of calculating FR include: (1) assisting in OE calculation noting an OE allocation strategy usually corresponds to an \emph{boundary point} of FR, either robust or not, and an OE strategy can be conveniently calculated with updated FR parameters and a pre-defined objective function; and (2) helping assess the security risk of operating a network with high penetration of DERs without repeatedly running power flows, as discussed in \cite{Wei2015a} for transmission networks.}
	
	The remainder of the paper is organised as follows. Section \ref{sec_02} presents the formulation of FR both with and without considering uncertainties from line impedances. Section \ref{sec_03} discusses the solution algorithm to seeking the RFR, followed by case studies in Section \ref{sec_04}, and the paper is concluded in Section \ref{sec_05}.
	
	\section{Feasible Region for DERs}\label{sec_02}
	\subsection{UTPF with operational constraints}
	UTPF is an essential tool for the distribution network analysis to capture its operational states \cite{RN37,RN19}. Although a typical low-voltage distribution network may contain four wires, including the neutral conductor, this paper assumes that the neutral conductor is well grounded and the line parameters for the equivalent three-phase networks can be calculated via Carson's Equations \cite{RN116}.
	
	Based on its current and voltage formulation, the typical UTPF model with operational constraints can be formulated as \eqref{utpf-cons}.
	\begin{subequations}\small
		\label{utpf-cons}
		\begin{eqnarray}
			\label{utpf-cons-01}
			V^{\phi}_{i_\text{ref}}=V^{\phi}_{0}~~\forall \phi\\
			\label{utpf-cons-02}
			V^{\phi}_{i}-V^{\phi}_{j}=\sum\nolimits_\psi{z_{ij}^{\phi\psi}I^{\phi}_{ij}}~~\forall \phi,\forall ij\\
			\label{utpf-cons-03}
			\sum_{n:n\rightarrow i}{I^{\phi}_{ni}}-\sum_{m:i\rightarrow m}{I^{\phi}_{im}}=\sum_m{\frac{\mu_{\phi,i,m}(P^\text{}_{m}-\text{j}Q^\text{}_m)}{(V^\phi_{i})^*}}~~\forall \phi,\forall i\\
			\label{utpf-cons-05}
			V^\text{min}_{i}\le |V^{\phi}_{i}|\le V^\text{max}_{i}~ \forall \phi,\forall i
		\end{eqnarray}
	\end{subequations}
	where
	$i_\text{ref}$ is the index of the reference bus and $V^{\phi}_{0}$ is its fixed voltage at phase $\phi$ (known parameter).
	$V^{\phi}_{i}$ is the voltage of phase $\phi$ at node $i$.
	$I^{\phi}_{ni}$ is the current in phase $\phi$ of line $ni$: flowing from bus $n$ to bus $i$.
	% $I^{\phi}_{i,m}$ is the current demanding from phase $\phi$ of bus $i$ from customer $m$.
	$P_m$ is the active power demand of customer $m$ while $Q_m$ is the reactive power demand. For simplicity, all $P_m$ and $Q_m$ are treated as variables in the formulation. However, they will be fixed as the forecasted values if they are uncontrollable.
	$\mu_{\phi,i,m}\in\{0,1\}$ is a parameter indicating the phase connection of customer $m$ with its value being $1$ if it is connected to phase $\phi$ of bus $i$ and being $0$ otherwise.
	$V^\text{min}_{i}$ and $V^\text{max}_{i}$ are the lower and upper limits of $|V_{i}|$, respectively.
	
	In the formulation, \eqref{utpf-cons-01} specifies the voltage at the reference bus. \eqref{utpf-cons-02} is the voltage drop equation in each line, and Kirchhoff's current law is formulated as \eqref{utpf-cons-03}. Only constraints on voltage magnitudes are considered in this paper, and they are expressed as \eqref{utpf-cons-05}.
	
	As discussed previously, a linear UTPF (LUTPF) model will be used, where the physical characteristics of a real distribution network, including the reality in most cases that differences of voltage angles in each phase are sufficiently small \cite{RN38,RN50} and nodal voltages throughout the network are close to $1.0~p.u.$, will be taken. \changed{Note that the linearisation techniques in an unbalanced distribution network is substantially different from the direct-current power flow (DC-PF) for an balanced transmission network, where for each line $ij$ all voltage magnitudes are replaced by 1.0 p.u., and $\cos\theta_{ij}$ and $\sin\theta_{ij}$ are approximated by 1.0 and $\theta_i-\theta_j$, respectively.} Specifically, \eqref{utpf-cons-03} can be linearised by fixing $V^\phi_i$ in the denominator on the right-hand side, while \eqref{utpf-cons-05} will be approximated by a set of linear constraints. Details of the linearisation approach can be found in \cite{BL-isgt-asia} and are omitted here for simplicity. For the convenience of later discussions, LUTPF can be expressed in the following compact form,
	\begin{subequations}\small\label{utpf-compact}
		\begin{eqnarray}
			\label{utpf-compact-01}
			Ap+Bq+Cl=b\\
			\label{utpf-compact-02}
			Dv+El=d\\
			\label{utpf-compact-03}
			Fv\le f
		\end{eqnarray}
	\end{subequations}
	where
	$p$ and $q$, which are independent variables, represent the active and reactive powers;
	$l$ represents the vector consisting of state variables of line currents;
	$v$ is a vector consisting of state variables related to nodal voltages;
	$A,B,C,b,D,E,d,F$ and $f$ are constant parameters with appropriate dimensions;
	
	In the formulation,
	\eqref{utpf-compact-01} links back to \eqref{utpf-cons-03} after linearisation and represents the relations between line currents and residential demands;
	\eqref{utpf-compact-02} represents the linearised power flow equations that link the power demands and currents running in all lines, i.e. \eqref{utpf-cons-01}-\eqref{utpf-cons-02},
	and \eqref{utpf-compact-03} represents all the operational constraints after the linearisation.
	
	\subsection{FR without network uncertainties}\label{fr_fixed}
	Noting that only $p$ and $q$ are independent variables, \eqref{utpf-compact} defines the FR for $(p,q)$. This paper focuses on deriving the feasibility region with $p$ as the independent variable, as most residential homes consume active power loads. Correspondingly, the FR depicts an area in which, if all realised values of $p$ fall within, the operational security of the network can be guaranteed \cite{Wei2015a,Riaz2022}. After removing state variables $v$ and $l$, the FR for $p$ can be expressed as \eqref{fr}.
	{\small \begin{eqnarray}\label{fr}
			\mathcal{F}(q)=\{p|Hp\le h\}
			=\left\{p\left\vert\begin{matrix}
				FD^{-1}EC^{-1}Ap\le f \\-FD^{-1}(d-EC^{-1}b+EC^{-1}Bq) \\
			\end{matrix}\right.\right\}
	\end{eqnarray}}
	where
	$H=FD^{-1}EC^{-1}A$ and $h=f-FD^{-1}(d-EC^{-1}b\\+EC^{-1}Bq)$.
	
	Obviously, the $\mathcal{F}(q)$ is a function of $q$, and also a polyhedron if $A,B,C,b,D,E,d,F$ and $f$ are fixed\footnote{For the convenience of later discussions, parameters will not be specifically indicated that it is a function of $q$ in following paragraphs.}. However, uncertainties inherently exist in real power systems and can arise from errors in forecasting load profiles for passive customers, incorrectly recorded phase connectivity for residential customers and network topology, and inaccuracy in measuring or estimating line impedances. This paper will focus on uncertainty modelling for line impedances and their impacts on the FR.
	
	\subsection{FR considering uncertainties from line impedances}
	\subsubsection{Uncertainty modelling}
	In this paper, uncertainties on line impedances will be modelled as box sets either on both self-impedances and mutual impedances directly, as presented in \eqref{uset-01}, or on both the positive/negative and zero-sequence impedances, as indicated by \eqref{uset-02}.
	{\small\begin{eqnarray}\label{uset-01}
			z_{ij}^{\phi\psi,\text{min}}\le z_{ij}^{\phi\psi}\le z_{ij}^{\phi\psi,\text{max}}~\forall ij,\forall \phi,\forall\psi
	\end{eqnarray}}
	where
	$z_{ij}^{\phi\psi,\text{min}}$ and $z_{ij}^{\phi\psi,\text{max}}$ are the low and upper bounds for the $z_{ij}^{\phi\psi}$, respectively.
	\begin{subequations}\label{uset-02}
		\begin{eqnarray}
			z_{ij}^{+,\text{min}}\le z_{ij}^{+}=z_{ij}^{-}\le z_{ij}^{+,\text{max}}~\forall ij\\
			z_{ij}^{0,\text{min}}\le z_{ij}^{0}\le z_{ij}^{0,\text{max}}~\forall ij\\
			z_{ij}^{\phi\psi}=(2z_{ij}^{+}+z_{ij}^{0})/3~\forall ij,\forall \phi,\forall \psi~(\psi=\phi)\\
			z_{ij}^{\phi\psi}=(-z_{ij}^{+}+z_{ij}^{0})/3~\forall ij,\forall \phi,\forall \psi~(\psi\neq\phi)
		\end{eqnarray}
	\end{subequations}
	where
	$z_{ij}^{+},z_{ij}^{-}$ and $z_{ij}^{0}$ are the positive, negative and zero-sequence impedances for line $ij$, respectively, with their lower or upper limits indicated by $z_{ij}^{+,\text{min}},z_{ij}^{+,\text{max}},z_{ij}^{0,\text{min}}$ and $z_{ij}^{0,\text{max}}$.
	
	For the convenience of later discussion, $z$ is used to represent line impedances, and its uncertainty is compactly formulated as $z^\text{min}\le z\le z^\text{max}$. Noting that $z_{ij}^{\phi\psi}$ are coefficients only for the \emph{current} terms in \eqref{utpf-cons-02}-\eqref{utpf-cons-03}, in the compact form \eqref{utpf-compact}, such uncertainties will be broadcasted to $E$ and also $H$ and $h$. Specifically, taking $E=f(z)$ to represent the linear mapping from uncertain line impedances $z$ to the matrix $E$, the uncertainty set for the pair $(H,h)$ can be formulated as
	{\small\begin{eqnarray}\label{uset-Hh}
			\mathcal{H}=\left\{(H,h)\left\vert\begin{matrix}
				H=FD^{-1}f(z)C^{-1}A                    \\
				h=f-FD^{-1}(d-f(z)C^{-1}b+f(z)C^{-1}Bq) \\
				z^\text{min}\le z\le z^\text{max}       \\
			\end{matrix}\right.\right\}
	\end{eqnarray}}
	
	\subsubsection{Mathematical formulation}
	With uncertain $E, H$ and $h$ in \eqref{fr}, to secure the network operation, it is critical to find a more \emph{conservative} yet more reliable and robust FR for all DERs' generation, which will be a subset of $\mathcal{F}$. The more reliable and robust FR, which is named \emph{robust feasible region (RFR)} in this paper, can be mathematically defined as
	{\small\begin{eqnarray}\label{rfr-01}
			\mathcal{R}=\{p|Gp\le g\}=\cap_{(H,h)\in\mathcal{H}}\mathcal{F}=\{p|Hp\le h~\forall (H,h)\in\mathcal{H}\}
	\end{eqnarray}}
	which is equivalent to
	{\small\begin{eqnarray}
			\label{rfr-02}
			\{p|Gp\le g\}\cap\{p|H_{i,:}p>h_i\}=\emptyset~\forall (H,h)\in\mathcal{H},\forall i
	\end{eqnarray}}
	where $H_{i,:}$ represents the $i-th$ row of $H$ and $h_i$ is the $i-th$ element in $h$.
	
	Obviously, the $\mathcal{R}$ is also a polyhedron. However, noting that uncertainties from $H$ and $h$ are depicted by a closed set, there are indefinite constraints in \eqref{rfr-02} and a suitable solution algorithm needs to be developed to get the exact formulation of the RFR.

	\section{Solution algorithm}\label{sec_03}
	\subsection{Reformulation of \eqref{rfr-02} to an optimisation problem}
	The reformulation is based on the MTT \cite{Ben_Israel2001}, where we have the following proposition for our case.
	\begin{proposition}
		The formulation \eqref{rfr-02} is equivalent to
		{\small\begin{eqnarray}\label{rfr-03}
				G^Tx_{:,i}=H^T_{i,:},~~x_{:,i}\ge 0,~~g^Tx_{:,i}\le h_i~\forall i
		\end{eqnarray}}
		where $x$ is a matrix variable and and $x_{:,i}$ is its $i-th$ column.
	\end{proposition}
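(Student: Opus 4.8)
The plan is to prove the two implications separately, fixing an index $i$ and a realisation $(H,h)\in\mathcal{H}$ throughout; since the argument is uniform in both, the quantifier $\forall i$ (and the implicit $\forall(H,h)$) distributes and can be reinstated at the end. The statement equates an \emph{infeasibility} assertion about a mixed system of linear inequalities with the \emph{existence} of a nonnegative certificate, which is exactly the setting of the MTT. The reverse implication is an elementary aggregation argument that needs no machinery, while the forward implication is where the theorem does the work, so I would organise the proof around that asymmetry.

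For sufficiency (\eqref{rfr-03} $\Rightarrow$ \eqref{rfr-02}) I would take any $p$ with $Gp\le g$ and, using $x_{:,i}\ge 0$ together with $G^Tx_{:,i}=H_{i,:}^T$ and $g^Tx_{:,i}\le h_i$, chain the inequalities $H_{i,:}p=x_{:,i}^TGp\le x_{:,i}^Tg=g^Tx_{:,i}\le h_i$. Hence no $p$ feasible for $Gp\le g$ can satisfy $H_{i,:}p>h_i$, so the intersection in \eqref{rfr-02} is empty. This direction requires neither the MTT nor any nonemptiness hypothesis.

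For necessity (\eqref{rfr-02} $\Rightarrow$ \eqref{rfr-03}) I would first rewrite the emptiness of the intersection as the logically equivalent statement that the system $Gp\le g,\ -H_{i,:}p<-h_i$ has no solution $p$, the negation of the strict inequality $H_{i,:}p>h_i$ being precisely $H_{i,:}p\le h_i$. To bring this inhomogeneous, mixed strict/non-strict system into the scope of the homogeneous MTT, I would homogenise with an auxiliary scalar $t$, replacing it by $Gp-gt\le 0,\ -H_{i,:}p+h_it<0,\ -t<0$ in the variables $(p,t)$: a solution with $t>0$ recovers a solution of the original through $p/t$, and conversely $(p,1)$ lifts any original solution, so the two systems are equi-infeasible. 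The transposition alternative then supplies nonnegative multipliers $(y_1,y_2)$ for the two strict rows and $z$ for the non-strict rows, with $(y_1,y_2)\neq 0$, satisfying $G^Tz=H_{i,:}^Ty_1$ and $g^Tz=h_iy_1-y_2$. When $y_1>0$, dividing by $y_1$ and setting $x_{:,i}=z/y_1$ delivers exactly $G^Tx_{:,i}=H_{i,:}^T$, $x_{:,i}\ge 0$ and $g^Tx_{:,i}=h_i-y_2/y_1\le h_i$, which is \eqref{rfr-03}.

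The one place that needs care, and which I expect to be the main obstacle, is the degenerate branch $y_1=0$: it forces $y_2>0$ and produces $z\ge 0$ with $G^Tz=0$ and $g^Tz<0$, i.e. a Farkas certificate that $\{p\,|\,Gp\le g\}$ is itself empty. In that case the intersection is trivially empty but a certificate of the form \eqref{rfr-03} need not exist, so the stated equivalence rests on $\{p\,|\,Gp\le g\}$ being nonempty. This is harmless in our setting because the RFR $\mathcal{R}$ is initialised from, and maintained as, a nonempty polyhedron throughout the algorithm of Section~\ref{sec_03}; under that standing assumption the branch $y_1=0$ is excluded and necessity follows. I would close by noting that the certificate $x_{:,i}$ so produced depends in general on $i$ and on the realisation $(H,h)$, so \eqref{rfr-03} is to be read as holding for each such pair, matching the quantification in \eqref{rfr-02}.
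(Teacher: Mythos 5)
Your proof is correct, and it rests on the same engine as the paper's proof --- the Motzkin transposition theorem --- but it reaches the alternative system by a genuinely different reformulation. The paper splits $p=p^+-p^-$, introduces a slack vector $s$ so that $\mathcal{R}$ becomes an equality system in nonnegative variables (its \eqref{mtt_01_01}--\eqref{mtt_01_02}), and then matches that against a two-system statement of MTT in which the scalar multiplier attached to the strict row is silently fixed to $1$; you instead homogenise $Gp\le g$, $H_{i,:}p>h_i$ with an auxiliary scalar $t>0$, invoke the classical homogeneous Motzkin alternative for mixed strict/non-strict systems, and dehomogenise by dividing through by the multiplier $y_1$ of the strict row. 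The two routes are equivalent in substance, but yours buys two things. First, your sufficiency direction is a two-line aggregation ($H_{i,:}p=x_{:,i}^TGp\le g^Tx_{:,i}\le h_i$) that makes explicit that no transposition theorem is needed there. Second, and more usefully, you isolate the degenerate branch $y_1=0$, which is precisely the case the paper's argument glosses over: in the paper's matching the MTT alternative permits the multiplier of the strict row to vanish, and when it does the certificate \eqref{rfr-03} need not exist even though the intersection in \eqref{rfr-02} is empty --- this occurs exactly when $\{p\,\vert\,Gp\le g\}$ is itself empty. Your remark that the equivalence therefore carries an implicit standing assumption $\mathcal{R}\neq\emptyset$ (harmless here, since $\mathcal{R}$ is initialised from the feasible nominal FR) is a genuine tightening of the paper's argument rather than a deviation from it.
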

	\begin{proof}
		Given the matrix $A$, vectors $b,d$ and a number $c$, MTT indicates that only one of the following two statements holds.
		\begin{enumerate}
			\item The system $Ax\le b, d^Tx\le c$ has a solution $x$;
			\item The system
			{\small\begin{eqnarray}\label{mtt_c}
					y\ge 0,~~z\ge 0,~~A^Ty+z\cdot d=0,~~b^Ty+c\cdot z< 0
			\end{eqnarray}}
		\end{enumerate}
		has a solution $(y,z)$.
		
		With \textit{MTT},  $\mathcal{R}=\{p|Gp\le g\}$ being a subset of $\mathcal{F}=\{p|Hp\le h\}$ is equivalent to $\mathcal{R}\cap\{p|H_{i,:}p>h_i\}=\emptyset~(\forall i)$. Further, $\mathcal{R}$ and $H_{i,:}p>h_i$ can be reformulated as \eqref{mtt_01_01} and \eqref{mtt_01_02}, respectively.
		\begin{subequations}\small
			\begin{eqnarray}\label{mtt_01_01}
				\left[G,~-G,~I_s\right]\left[\begin{matrix}p^+\\p^-\\s\end{matrix}\right] + 1\cdot (-g)=0, ~~
				\left[\begin{matrix}p^+\\p^-\\s\end{matrix}\right]\ge 0\\
				\label{mtt_01_02}
				\left[-H_{i,:},~H_{i,:},~0_s^T\right]\left[\begin{matrix}p^+\\p^-\\s\end{matrix}\right]+h<0
			\end{eqnarray}
		\end{subequations}
		where $y=y^+-y^-$ and $s$ is an auxiliary vector variable.
		
		After applying MTT, $\mathcal{R}\cap\{p|H_{i,:}p>h_i\}=\emptyset$ is equivalent to
		{\small\begin{eqnarray}
				G^Ty=H_{i,:}^T,~y\ge 0,~g^Ty\le h_i
		\end{eqnarray}}
		with $y$ being a vector variable and, accordingly, \eqref{rfr-02} can be equivalently reformulated as
		{\small\begin{eqnarray}
				G^Tx_{:,i}=H^T_{i,:},~~x_{:,i}\ge 0,~~g^Tx_{:,i}\le h_i~\forall i
		\end{eqnarray}}
		where $x$ is now a matrix variable.
	\end{proof}
	
	Based on the proposition, if \eqref{rfr-02} holds for fixed $H$ and $h$, the optimal objective value of \eqref{rfr-04} must be 0.
	\begin{subequations}\label{rfr-04}
		\small
		\begin{eqnarray}
			\label{rfr-04-01}
			\min_{x,\Delta h}{1^T\Delta h}\\
			s.t.~~~G^Tx=H^T,~g^Tx\le h^T+\Delta h^T\\
			x\ge 0,~\Delta h\ge 0
		\end{eqnarray}
	\end{subequations}
	
	Similarly, if \eqref{rfr-02} holds for all possible $H$ and $h$, the optimal objective value of \eqref{rfr-05} will be 0.
	\begin{subequations}\label{rfr-05}
		\small
		\begin{eqnarray}
			\label{rfr-05-01}
			\max_{(H,h)\in \mathcal{H}}\min_{x,\Delta h}{1^T\Delta h}\\
			s.t.~~~G^Tx_{:,i}=H^T_i~\forall i~~~(\alpha_i)\\
			x_{:,i}\ge 0,~\Delta h_i\ge 0~\forall i\\
			g^Tx_{:,i}\le h_i+\Delta h_i~\forall i~~~(\beta_i)
		\end{eqnarray}
	\end{subequations}
	where
	$\alpha_i$, as a vector, and $\beta$, as a scaler, are Lagrange multipliers.
	
	Based on strong duality theory, \eqref{rfr-05} is equivalent to
	\begin{subequations}\label{rfr-06}
		\small
		\begin{eqnarray}
			\text{BiLP-Max:~~} \max_{(H,h)\in \mathcal{H},\alpha,\beta}{\sum\nolimits_i{(-\alpha_i^TH^T_{i,:}-\beta_i h_i})}\\
			s.t.~~~0\le \beta_i \le 1~\forall i\\
			\alpha_i^TG^T+\beta_i g^T\ge 0~\forall i
		\end{eqnarray}
	\end{subequations}
	
	Obviously, \eqref{rfr-06} is a non-convex bilinear programming problem, where several solution approaches are available. One of them is reformulating \eqref{rfr-06} into a mixed-integer linear programming (MILP) problem noting that optimal solutions of $(H,h)$ always fall on one extreme point of $\mathcal{H}$. However, such an approach could become computationally intractable if there is a large number of uncertain variables \cite{Liu2019,Wei2015a}. Another approach is based on iteration-based linear programming (ITLP), where a series of linear programming (LP) problems will be solved by alternately fixing $(H,h)$ and $(\alpha,\beta)$ in \eqref{rfr-06} until reaching convergence. Although the ITLP-based approach cannot guarantee a global solution, it has been demonstrated with good performance with a sufficient number of and properly selected starting points for $(H,h)$ \cite{Wei2015a,Liu2019}. Noting that the number of uncertain parameters can be large in our case, the ITLP-based approach will be employed to solve \eqref{rfr-05}.
	
	\subsection{Updating $\mathcal{R}$ and removing redundant constraints}
	After solving \eqref{rfr-06}, a set of linear inequalities will be generated if the objective value is positive and added to update $\mathcal{R}$. With reported solutions, say $(H^*,h^*)$, from solving \eqref{rfr-06}, the RFR will be updated as
	{\small\begin{eqnarray}\label{sol-01}
			\text{U-RFR:~~}\mathcal{R}=\mathcal{R}\cap \{p|H^*p\le f^*\}
	\end{eqnarray}}
	
	Further, to reduce the computational burden in solving \eqref{rfr-06}, before conducting the next iteration, redundant constraints in the updated $\mathcal{R}$ will be removed first \cite{Fukuda2014}. In summary, a schematic diagram showing the process of seeking the RFR for DERs is given in Fig.\ref{fig_digram_robustFR}.
	\begin{figure}[htpb!]
		\centering\includegraphics[scale=0.7]{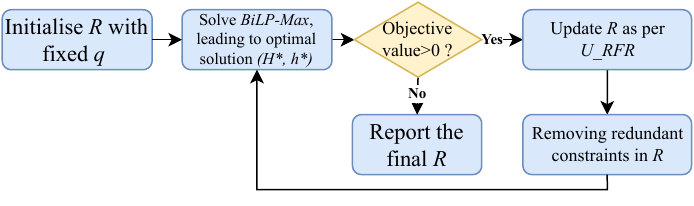}
		\caption{The schematic diagram of calculating the robust feasible region for DERs.}
		\label{fig_digram_robustFR}
	\end{figure}
	
	Several remarks on calculating the RFR for DERs are given below.
	\begin{enumerate}
		\item Noting that in a real network, the initial value of $\mathcal{R}$, denoted as $\bar{\mathcal{R}}$, can be initialised after fixing $z$ as its measured or estimated value.
		\item Although controllable reactive power $q$ can be regarded as a variable, it is fixed as a constant when calculating the RFR in this paper.
		\item Properly selecting starting points for $(H,h)$ is critical to ensure the quality of the solutions. In this paper, all projected points from $(\widetilde H,\widetilde h)$ to its uncertainty set boundaries will be used as starting points, which, as demonstrated in \cite{Wei2015a}, can lead to a high-quality solution for \eqref{rfr-06}.
		\item After solving \eqref{rfr-05}, several solutions, which link back to the various starting point for $(H,h)$, can be reported. In such cases, $\mathcal{R}$ will be updated by considering all solutions in this paper.
		\item \changed{More uncertainties being considered will lead to various uncertain parameters in \eqref{utpf-compact} and hence various complexity and tractability in solving \eqref{rfr-05} or \eqref{rfr-06}. For example, errors in forecasting load profiles will lead to uncertainties in $b$, and uncertain network topology will result in uncertainties in both $C$ and $E$. Properly quantifying the uncertainty of such parameters and seeking efficient approaches to solving the derived optimisation problems, although beyond the scope of this paper, fall within our future research interest.}
		\item All LP problems in this paper are formulated in \texttt{Julia} \cite{bezanson2017julia} with data parsed by \texttt{PowerModelsDistribution.jl} \cite{pmd_ref}, and solved by Gurobi \cite{gurobi} on a laptop with Intel Core i7-1185G7 @ 3.00GHz and 16 GB RAM. It is noteworthy that parallel computing techniques can potentially accelerate computational efficiency, noting that \eqref{rfr-06} with different starting points can be solved simultaneously. However, investigating how parallel computing techniques could improve computational performance is beyond the scope of this paper and falls within our future research interest.
	\end{enumerate}

	\section{Case Study}\label{sec_04}
	\subsection{Case setup}
	In this section, two distribution networks, one of which is a 2-bus illustrative network and the other one is a representative Australian network, will be studied.
	For the illustrative distribution network, an ideal balanced voltage source with the voltage magnitude being $1.0~p.u.$ is connected to bus 1.
	A three-phase line connects bus 1 and bus 2, and its recorded impedance matrix is
	\begin{equation*}
		\bar{z}_{12}=\left[\begin{smallmatrix}
			0.3465+\text{j}1.0179 & 0.1560+\text{j}0.5017 & 0.1580+\text{j}0.4236\\
			0.1560+\text{j}0.5017 & 0.3375+\text{j}1.0478 & 0.1535+\text{j}0.3849\\
			0.1580+\text{j}0.4236 & 0.1535+\text{j}0.3849 & 0.3414+\text{j}1.0348
		\end{smallmatrix}\right] \Omega
	\end{equation*}
	and, given a number $\delta\in[0,1]$ representing the uncertainty level, we also assume the true line impedances are within the range $[1-\delta, 1+\delta]$ of its recorded value, i.e.
	{\small\begin{eqnarray}
			(1-\delta)\bar{z}_{12}\le z_{12}\le (1+\delta)\bar{z}_{12}
	\end{eqnarray}}
	and $\delta$ is set as 20\% for the 2-bus network.
	
	Three single-phase customers, numbered 1 to 3, are connected to phase $b$, $a$ and $c$ of bus 2, respectively, where all reactive powers are fixed at 0 kVar, and the active power of customer 2 is fixed at -2.0 kW. Moreover, both customers' default export/import limits are set as 20 kW. For this 2-bus network, only voltage magnitude constraints are considered for bus 2, where the lower and upper bounds for each phase are set as $0.95~p.u.$ and $1.05~p.u.$, respectively.
	
	The topology and network parameters of the representative Australian distribution network can be found in \cite{liu2022robust} and \cite{LVFT_data}, where the transformer has been changed to $Y_n/Y_n$ connection with $R$\%=5 and $X$\%=7. The voltage at the reference bus is set as $[1.0,~1.0e^{-\text{j}\frac{2\pi}{3}},~1.0e^{\text{j}\frac{2\pi}{3}}]^T$ while the upper and lower limits on voltage magnitudes of all the other buses are set as 1.05 $p.u.$ and 0.95 $p.u.$, respectively.
	Similar to the 2-bus network, the reactive powers of all customers are set to 0 kVar, and the active powers of 10 customers (customers: ``1", ``10", ``11", ``12", ``13", ``14", ``15", ``16", ``17", ``18"), where default export and import limits are set as 5 kW and 6 kW, respectively, form the independent variables in calculating the FR. In contrast, demands of all the other active customers are fixed at the values provided in \cite{LVFT_data}.
	In the Australian network, line impedances are generated from 5 line codes, where uncertainties are assumed on both their positive/negative and zero-sequence impedances ($\delta$=10\%).
	% \begin{figure}[htpb!]
		% 	\centering\includegraphics[scale=0.5]{}
		% 	\caption{Network topology of the real Australian network (\textbf{Upstream grid (reference bus) and its connected bus} are marked in yellow).}
		% 	\label{fig_topology_simple_network_J_doe}
		% \end{figure}

	\subsection{The 2-bus illustrative distribution network}
	As only one line exists in the 2-bus illustrative network, there are 12 independent parameters with uncertainties, leading to a total number of 24 initial points by projecting $\bar z$ to the boundaries of its uncertainty set for solving \eqref{rfr-06}. %In solving the BiLP-Max, the value of $m$ is set as 24 to cover all the initial points.
	The proposed approach takes 42.26 seconds in total after three iterations to report the RFR, where the number of reported solutions and the computational time for each iteration are presented in Fig.\ref{fig_sol_2bus}. As shown in the figure, the number of unique solutions reported from solving the BiLP-Max problem becomes smaller while the objective value decreases along with increasing iterations. Regarding the computational time, the solver takes most of the time in solving BiLP-Max while removing redundant constraints is much more efficient, averagely taking 0.2 seconds in each iteration.
	\begin{figure}[htbp!]
		\centering
		\begin{subfigure}[b]{0.23\textwidth}
			\centering\includegraphics[width=\textwidth]{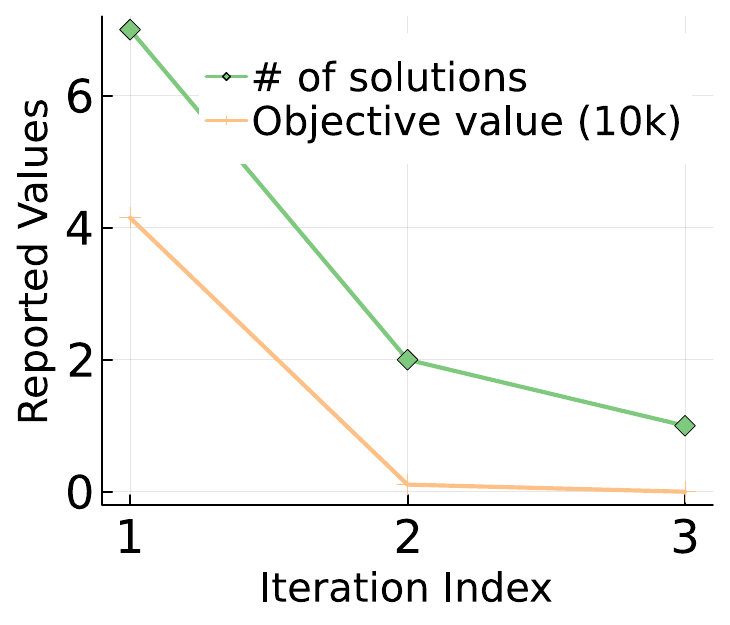}
			\label{fig_sol_2bus_fea}
		\end{subfigure}
		\hfill
		\begin{subfigure}[b]{0.23\textwidth}
			\centering\includegraphics[width=\textwidth]{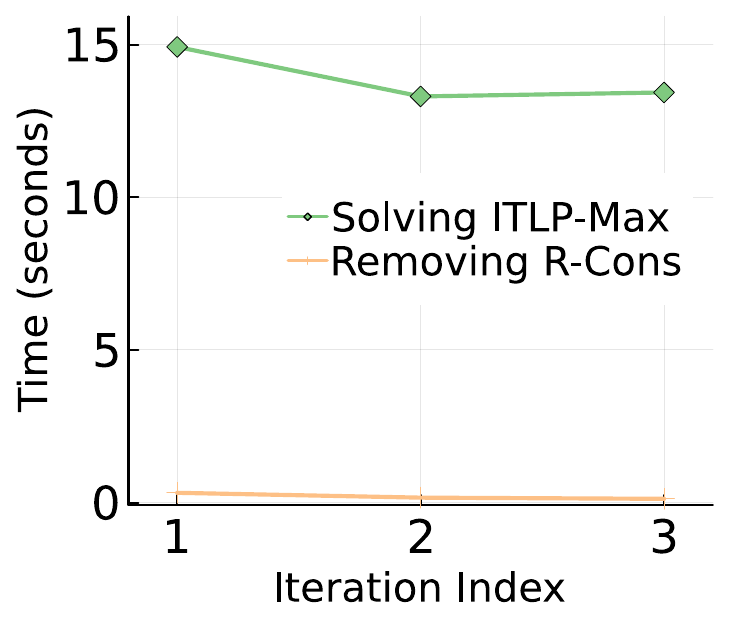}
			\label{fig_comput_time_2_bus}
		\end{subfigure}
		\hfill
		\caption{Computational and solution information for the 2-bus illustrative network.}
		\label{fig_sol_2bus}
	\end{figure}
	
	The initial FR, the finally reported RFR, and FRs calculated with randomly generated scenarios for $z$ are presented in Fig.\ref{fig_FRs_2_bus}, where the calculated RFR is a subset of both the randomly generated FRs and the initialised FR, demonstrating its robustness.
	\begin{figure}[htpb!]
		\centering\includegraphics[scale=0.2]{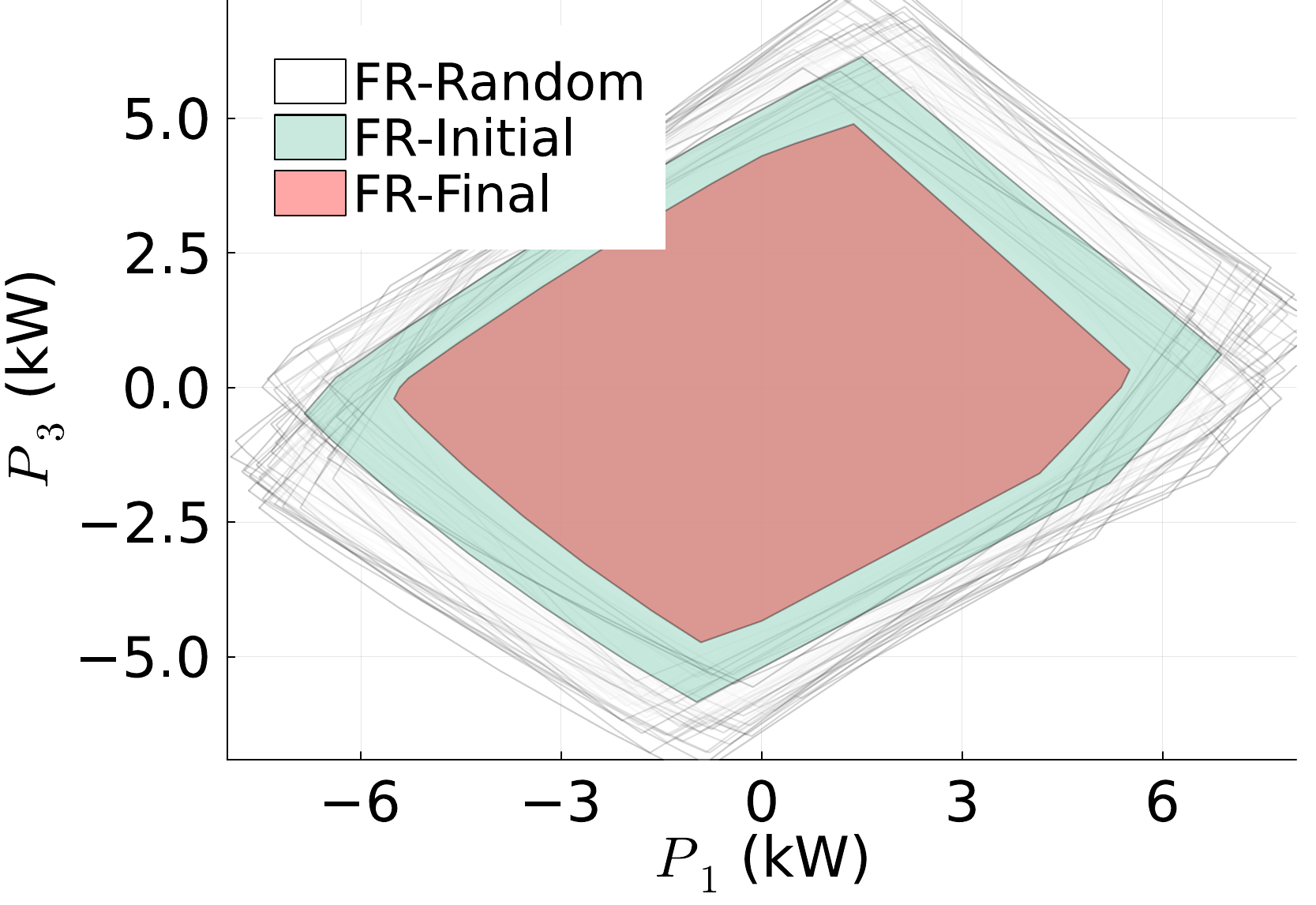}
		\caption{FRs calculated for the 2-bus illustrative network (R-Cons: Redundant Constraints).}
		\label{fig_FRs_2_bus}
	\end{figure}
	
	Moreover, the percentage of random FRs containing the RFR and initialised FR are also calculated, where the number is 100\% for RFR and 0\% for the initialised FR. As a higher percentage implies higher reliability and robustness of FR against uncertainties, such results further demonstrate the effectiveness of the proposed approach.
	
	\subsection{The real Australian network}
	The Australian network is used to test the computational efficiency of the proposed approach in a real network. Simulation results, including the number of reported solutions, objective value and computational time for each iteration, are presented in Fig.\ref{fig_sol_aus_J}.
	\begin{figure}[htbp!]
		\centering
		\hfill
		\begin{subfigure}[b]{0.23\textwidth}
			\centering\includegraphics[width=\textwidth]{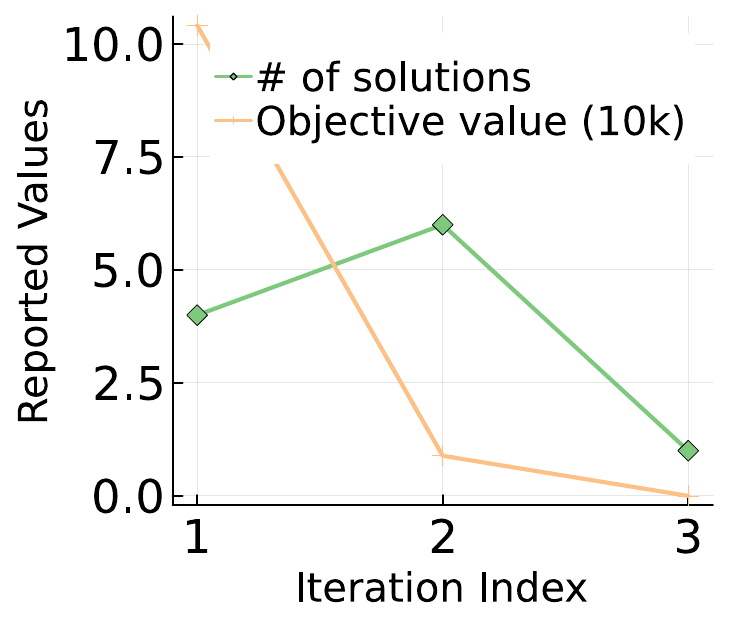}
			\label{fig_sol_aus_J_fea}
		\end{subfigure}
		\hfill
		\begin{subfigure}[b]{0.23\textwidth}
			\centering\includegraphics[width=\textwidth]{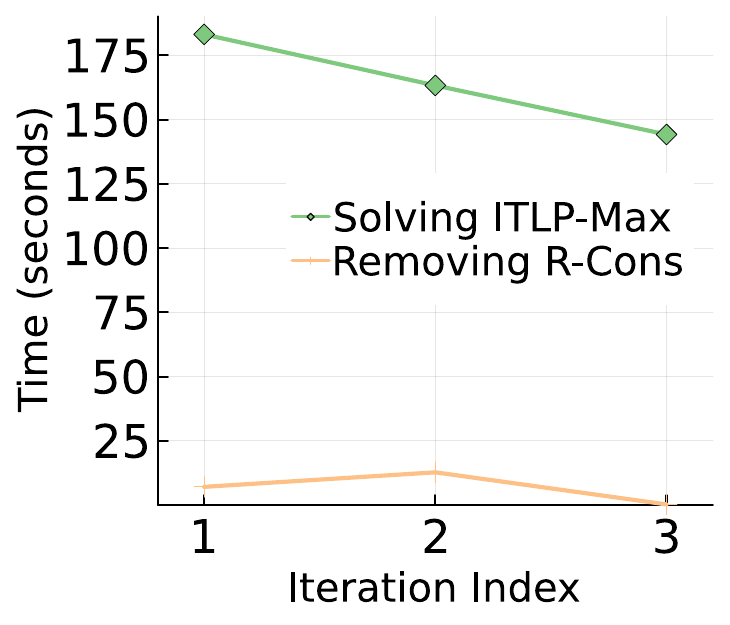}
			\label{fig_comput_time_aus_J}
		\end{subfigure}
		\caption{Computational and solution information for the Australian network (R-Cons: Redundant Constraints).}
		\label{fig_sol_aus_J}
	\end{figure}
	
	Similar to the 2-bus network, 100 FRs are also randomly generated to test the effectiveness of the RFR, where the percentage of random FRs containing the RFR and the initialised FR are 62\% and 0\%, respectively, where the number for RFR being less than 100\% is due to the local optimality in solving the BiLP-Max problem by the ITLP approach.
	
	Regarding the computational process, the proposed approach reports the RFR within three iterations, where each iteration consumes 130-180 seconds. Although the computational efficiency is acceptable if the RFR is used for planning purposes, we admit that it may not be suitable for intra-day or real-time operation purposes if the dispatch interval is less than 10 minutes. However, the practicality of the proposed approach can be significantly enhanced by the following measures.
	\begin{itemize}
		\item Further investigating the uncertainty modelling of line impedances to reduce the number of uncertain parameters and the conservativeness of uncertainty set.
		\item Applying parallel computing techniques (PCTs). Nothing that solving the BiLP-Max problem with various starting points of $(H,h)$ in each iteration can be paralleled, PCTs could considerably enhance the computational performance of the proposed approach.
	\end{itemize}
	
	\section{Conclusion}\label{sec_05}
	The paper studies how to calculate feasible region for DERs in an unbalanced distribution network while considering uncertainties from line impedances. The proposed approach, with an initialised FR, tries to seek the optimal solution by solving a non-convex bilinear optimisation problem and updating the RFR successively until reaching convergence, which has been demonstrated effective in two distribution networks. To make the proposed approach more practical, several research directions are open in this area and fall within our future research interest, including a) Improving computational efficiency in networks with more types of uncertainties and more DERs, particularly in properly modelling uncertainties and efficiently solving the BiLP-Max problem; 2) Investigating the implications of RFR on calculating operating envelopes.
	
	\bibliography{REFs_Power_Grid}
	
\end{document}